\providecommand{\U}[1]{\protect\rule{.1in}{.1in}}
\newtheorem{theorem}{Theorem}
\newtheorem{definition}[theorem]{Definition}
\newtheorem{proposition}[theorem]{Proposition}
\newtheorem{remark}[theorem]{Remark}
\numberwithin{equation}{section}
\numberwithin{theorem}{section}
\begin{document}
\title{Coarse Ricci curvature as a function on $M\times M$}
\author{Antonio G. Ache}
\address{Mathematics Department, Princeton University, Fine Hall, Washington Road,
Princeton New Jersey 08544-1000 USA }
\email{aache@math.princeton.edu}
\author{Micah W. Warren}
\address{Department of Mathematics, University of Oregon, Eugene OR 97403}
\email{micahw@oregon.edu}
\thanks{The first author was partially supported by a postdoctoral fellowship of the
National Science Foundation, award No. DMS-1204742.}
\thanks{The second author is partially supported by NSF Grant DMS-1438359. }
\maketitle

\begin{abstract}
We use the framework used by Bakry and Emery in their work on logarithmic
Sobolev inequalities to define a notion of coarse Ricci curvature on smooth
metric measure spaces alternative to the notion proposed by Y. Ollivier.
\ This function can be used to recover the Ricci tensor on smooth Riemannian
manifolds by the formula
\[
\mathrm{Ric}(\gamma^{\prime}\left(  0\right)  ,\gamma^{\prime}\left(
0\right)  )=\frac{1}{2}\frac{d^{2}}{ds^{2}}\mathrm{Ric}_{\triangle_{g}%
}(x,\gamma\left(  s\right)  )
\]
for any curve $\gamma(s).$\ 

\end{abstract}
\tableofcontents


\section{Introduction}

The Riemannian metric tensor $g$ can be recovered by differentiating the
metric function $d_{g}^{2}$ as follows. \ Given a curve $\gamma(s)$ in $M$,
with $\ \gamma^{\prime}(0)\in T_{x}M$
\[
g(\gamma^{\prime}(0),\gamma^{\prime}(0))=\frac{1}{2}\frac{d^{2}}{ds^{2}}%
d_{g}^{2}(x,\gamma(s)).
\]
In this note, we construct and discuss a similar potential for the Ricci
tensor: \ A\ function
\[
\mathrm{cRic}:M\times M\smallsetminus\mathfrak{C}\rightarrow%
\mathbb{R}
\]
(here $\mathfrak{C}$ is a \textquotedblleft cut locus" to be defined)\ such
that
\[
\mathrm{Ric}_{x}(\gamma^{\prime}(0),\gamma^{\prime}(0))=\frac{1}{2}\frac
{d^{2}}{ds^{2}}\mathrm{cRic}(x,\gamma(s)).
\]
We are most interested in functions which have following \textit{coarse Ricci
curvature lower bound }property :
\[
\mathrm{cRic}(x,y)\geq Kd_{g}^{2}(x,y)\text{ for all }x,y\in M
\]
if and only if%
\[
\mathrm{Ric}\geq Kg\text{ on }M.
\]
\ 

There is an abundance of functions that satisfy this property. \ We propose
one in particular, grounded in the Bakry-Emery $\Gamma_{2}$ calculus. In
particular, we appeal to the Bochner formula
\[
\Gamma_{2}(f,f)=\mathrm{Ric}(\nabla f,\nabla f)+\Vert\nabla^{2}f\Vert^{2}.
\]
\ The original motivation for this project was to determine if the Ricci
curvature could be recovered by using approximations of Laplace operators.
\ \ For this reason we desire a coarse Ricci function that can be constructed
from the Laplace operator and the distance function, without appealing to any
tensor calculus. \ While there are other, perhaps more naive functions which
satisfy the above properties, we use a particular one: see (\ref{RicDef}).

In \cite{BN08}, Belkin and Niyogi show that the graph Laplacian of a point
cloud of data samples taken from a submanifold in Euclidean space converges to
the Laplace-Beltrami operator on the underlying manifold. Following this
reasoning, in \cite{AW2} we define a robust family of coarse Ricci curvature
operators on metric measure spaces which depend on a scale parameter $t$.
\ Motivated by problems in manifold learning, in \cite{AW2} we show that on
smooth submanifolds of Euclidean space, these Ricci curvatures recover the
intrinsic Ricci curvature in the limit. Explicit convergence rates and
applications to manifold learning are explored in \cite{AW}.

\subsection{Background and Motivation}

\label{introBM}

The motivation for the paper stems from both the theory of Ricci lower bounds
on metric measure spaces and the theory of manifold learning.

\subsubsection{Ricci Curvature Lower Bounds on Metric Measure Spaces}

\label{introRLB}

One of our main sources of intuition for understanding Ricci curvature is the
problem of reinterpreting lower bounds on the Ricci curvature in such a way
that it becomes stable under Gromov-Hausdorff limits and thus defining ``weak"
notion of lower bounds on Ricci curvature. This theory has undergone
significant development over the last 10-15 years. A major achievement is the
work of Lott-Villani \cite{LV09} and Sturm \cite{stI,stII}, which defines
Ricci curvature lower bounds on metric measure spaces. These definitions work
quite beautifully provided the metric space is also a length space, but fail
to be useful on discrete spaces. The underlying calculus for this theory lies
in optimal transport: Given a metric measure space $(X,d,\mu)$ one can
consider the space $\mathcal{B}(X)$ of all Borel probability measures with the
$2$-Wasserstein distance, which we will denote by $W_{2}$, and is given by
\begin{align}
\label{mkdist}W_{2}(\mu_{1},\mu_{2})=\sqrt{\inf_{\gamma\in\Pi}\int_{X\times
X}d^{2}(x,y)d\gamma(x,y)},
\end{align}
where $\Pi$ is the set of all probability measures in $\mathcal{B}(X\times X)$
whose marginals are the measures $\mu_{1}$ and $\mu_{2}$, i.e. if
$P_{i}:X\times X\rightarrow X$ for $i=1,2,$ are the projections onto the first
and second factors respectively, then $\mu_{i}=(P_{i})_{*}\gamma$ (the
push-forward of $\gamma$ by $P_{i}$). One of the crucial ideas in the work of
Lott-Villani and Sturm is to use tools from optimal transport to define a
notion of convexity with respect to 2-Wasserstein geodesics (i.e., geodesics
with respect to the Wasserstein distance $W_{2}$) such that lower bounds on
the Ricci curvature are equivalent to the geodesic convexity of well-chosen
functionals. Further, they show that this convexity property is stable under
measured Gromov-Hausdorff limits. In particular, fixing a background measure
$\nu$ they define the entropy of $\mu$ with respect to $\nu$ as
\begin{align}
\label{entropy}E(\mu| \nu) = \int\frac{d\mu}{d\nu} \log\left(  \frac{d\mu
}{d\nu} \right)  d\nu,
\end{align}
and define a space to have non-negative Ricci curvature if the entropy
(\ref{entropy}) is convex along Wasserstein geodesics. This convexity property
along Wasserstein geodesics, or \emph{displacement convexity,} used by
Lott-Villani was in turn inspired by the work of Robert McCann in \cite{mc97}.
Before the work of Lott-Villani and Sturm, it was well known that lower bounds
on sectional curvature are stable under Gromov-Hausdorff limits. For more on
the theory of Alexandrov spaces, see for example \cite{bbi}.

The work of Lott-Villani and Sturm provides a precise definition of lower
bounds on Ricci Curvature on length spaces that a priori may be very rough.
More recently, Aaron Naber in \cite{Nab13} used stochastic analysis on
manifolds to characterize two-sided bounds on non-smooth spaces. Naber's
approach also relies heavily on the existence of Lipschitz geodesics. On the
other hand, it is easy to show (for example, consider the space with two
positively measured points unit distance apart) that the Wasserstein space
$W_{2}$ for discrete spaces does not admit any Lipschitz geodesics. To
overcome the lack of Wasserstein geodesics on discrete space, Bonciocat and
Sturm \cite{BS09} introduced the notion of an``$h$-rough geodesic," which
behaves like a geodesic up to an error $h.$ Then using optimal transport
methods, they are able to define a lower bound on Ricci curvature, which
depends on the scale $h.$

The notion of \emph{coarse Ricci curvature} was developed to characterize
Ricci curvature, including lower bounds, on a more general class of spaces.
The first definition of coarse Ricci curvature was proposed by Ollivier
\cite{Oll09} as a function on pairs of points in a metric measure space.
Heuristically, two points should have positive coarse Ricci curvature if
geodesics balls near the points are \textquotedblleft closer" to each other
than the points themselves. One possible precise definition of
\textquotedblleft close" is given by (\ref{mkdist}). Comparing the distance
between two points to the distance of normalized unit balls allows one to
extract useful information about the geometry of the space. It is also natural
to consider the heat kernel for some small positive time around each given
point. In fact, the intuition becomes blatant in the face of \cite[Cor. 1.4
(x)]{vRS}, which states that the distance (\ref{mkdist}) decays at an
exponential rate given by the Ricci lower bound, as mass spreads out from two
points via Brownian motion.

Even earlier, it was shown by Jordan, Kinderlehrer and Otto \cite{JKO} that
the gradient flow of the entropy function (\ref{entropy}) on $W_{2}$-space
agrees with the heat flow on $L^{2}.$ On nice metric measure spaces, one
expects this flow to converge to an invariant measure, the same one from which
the entropy was originally defined. Thus in principle, the behavior of the
entropy functional (\ref{entropy}) and the generator of heat flow are
fundamentally related. This deep relationship is explored in generality in the
recent paper \cite{AGS12}. It is natural then to attempt to define Ricci
curvature in terms of a Markov process. In fact, Ollivier's idea was to
compare the distance between two points to the distance between the point
masses after one step in the Markov process. This is also essentially the idea
in Lin-Lu-Yau \cite{LLY11}, where a lower bound on Ricci curvature of graphs
is defined. In contrast, a $\Gamma_{2}$ approach was used by Lin-Yau to define
Ricci curvature lower bounds on graphs in \cite{LY10}.

Recently, Erbas and Maas \cite{EM12} and Mielke \cite{Mi13} have provided a
very natural way to define Ricci curvature for arbitrary Markov chains. This
involves creating a new Wasserstein space, called the discrete transportation
metric, in which one can implement the idea of Lott-Sturm-Villani, i.e.,
relating lower bounds on Ricci curvature to geodesic convexity of certain
entropy functional.
Gigli and Maas \cite{GM13} show that in the limit these transportation metrics
converge to the Wasserstein space of the manifold in question as the mesh size
goes to zero, at least on the torus. Thus this notion of Erbas and Maas
recovers the Ricci curvature in the limit.

Any discussion of lower bounds on Ricci curvature would not be complete
without a discussion of isoperimetric inequalities, most notably the
log-Sobolev inequality. These inequalities, which hold for positive Ricci
lower bounds, were generalized by Lott-Villani. In the coarse Ricci setting,
log-Sobolev inequalities have been proved by Ollivier, Lin-Yau and Erbar-Maas
for their respective definitions of Ricci curvature lower bounds. We will
mention the log-Sobolev inequality that holds in general when a Bakry-Emery
condition is present. \ This Bakry-Emery type condition in principle, should
be related to coarse Ricci curvature, however, it is not clear to us in the
non-Riemannian case what the relationship should be. \ 

For further introduction to concepts of coarse Ricci curvature see the survey
of Ollivier \cite{SurveyO}.

\subsubsection{The Manifold Learning Problem}

\label{introML} Roughly speaking, the manifold learning problem deals with
inferring or predicting geometric information from a manifold if one is only
given a point cloud on the manifold, i.e., a sample of points drawn from the
manifold at random according to a certain distribution, without any further
information. From a pure mathematical perspective, a point cloud can be a
metric measure space that approximates a manifold in a measured
Gromov-Hausdorff sense despite being a discrete set. Belkin and Niyogi showed
in \cite{BN08} that given a uniformly distributed point cloud on $\Sigma$
there is a 1-parameter family of operators $L_{t},$ which converge to the
Laplace-Beltrami operator $\Delta_{g}$ on the submanifold. More generally, the
results in \cite{CoLaf06} and \cite{SW12} show that it is possible to recover
a whole family of operators that include the Fokker-Planck operator and the
weighted Laplacian $\Delta_{\rho}f=\Delta f-\langle\nabla\rho,\nabla f\rangle$
associated to the smooth metric measure space $(M,g,e^{-\rho}d\mathrm{vol})$,
where $\rho$ is a smooth function.

In \cite{AW2} we consider the problem of learning the Ricci curvature of an
embedded submanifold $\Sigma$ of $\mathbb{R}^{N}$ at a point from purely
measure and distance considerations. In \cite{AW} we will show that one
expects these notions to converge almost surely with an appropriate choice of scale.

\section{Carr\'{e} Du Champ and Bochner Formula}

Let $P_{t}$ be a $1$-parameter family of operators of the form
\[
P_{t}f(x)=\int_{M}f(y)p_{t}(x,dy),
\]
where $f$ is a bounded measurable function defined on $M$ and $p_{t}(x,dy)$ is
a non-negative kernel. We assume that $P_{t}$ satisfies the semi-group
property, i.e.
\begin{align}
P_{t+s} &  =P_{t}\circ P_{s}.\\
P_{0} &  =\mathrm{Id}.
\end{align}
In $\mathbb{R}^{n}$, an example of $P_{t}$ is the \emph{Brownian motion},
defined by the density
\[
p_{t}(x,dy)=\frac{1}{(2\pi t)^{n/2}}e^{-\frac{|x-y|^{2}}{2t}}dy,t\geq0.
\]
If now $P_{t}$ is a diffusion semi-group defined on $(M,g)$, we let $L$ be the
infinitesimal generator of $P_{t},$ which is densely defined in $L^{2}$ by%

\begin{align}
Lf=\lim_{t\rightarrow0}t^{-1}(P_{t}f-f).
\end{align}

We consider a bilinear form which has been introduced in potential theory by
J.P. Roth \cite{Roth74} and by Kunita in probability theory \cite{Kunita69}
and measures the failure of $L$ from satisfying the Leibnitz rule. This
bilinear form, the \emph{Carr\'{e} du Champ} , is defined as
\[
\Gamma(L,u,v)=\frac{1}{2}\left(  L(uv)-L(u)v-uL(v)\right)  .
\]

When $L$ is the rough Laplacian with respect to the metric $g$, then
\[
\Gamma(\Delta_{g},u,v)=\langle\nabla u,\nabla v\rangle.
\]
We will also consider the \emph{iterated Carr\'{e} du Champ} introduced by
Bakry and Emery \cite{BE85} denoted by $\Gamma_{2}$ and defined by
\[
\Gamma_{2}(L,u,v)=\frac{1}{2}\left(  L(\Gamma(L,u,v))-\Gamma(L,Lu,v)-\Gamma
(L,u,Lv)\right)  .
\]

Note that if we restrict our attention to the case $L=\Delta_{g}$ the Bochner
formula yields
\begin{align}
\Gamma_{2}(\Delta_{g},u,v)  &  =\frac{1}{2}\Delta\langle\nabla u,\nabla
v\rangle_{g}-\frac{1}{2}\langle\nabla\Delta_{g}u,\nabla v\rangle_{g}-\frac
{1}{2}\langle\nabla u,\nabla\Delta_{g}v\rangle_{g}\\
&  =\mathrm{Ric}(\nabla u,\nabla v)+\langle\mathrm{Hess}_{u},\mathrm{Hess}%
_{v}\rangle_{g}.
\end{align}

The fundamental observation of Bakry and Emery is that the properties of Ricci
curvature lower bounds can be observed and exploited by using the bilinear
form $\Gamma_{2}.$ With this in mind, they define a curvature-dimension
condition for an operator $L$ on a space $X$ as follows. If there exist
measurable functions
$k:X\rightarrow\mathbb{R}$ and $N:X\rightarrow[1,\infty]$ such that
for every $f$ on a set of functions dense in $L^{2}(X,d\nu)$ the inequality
\begin{align}
\label{cdkn}\Gamma_{2}(L,f,f)\ge\frac{1}{N}(Lf)^{2}+k\Gamma(L,f,f)
\end{align}
holds, then the space $X$ together with the operator $L$ satisfies the
$CD(k,N)$ \emph{condition}, where $k$ stands for curvature and $N$ for
dimension. In particular, when considering a smooth metric measure space
$(M^{n},g,e^{-\rho}d\mathrm{vol})$ one has the natural diffusion operator
\begin{align}
\Delta_{\rho} u=\Delta u-\langle\nabla\rho,\nabla u\rangle,
\end{align}
corresponding to the variation of the Dirichlet energy with respect to the
measure $e^{-\rho}d\mathrm{vol}$. By studying the properties of $\Delta_{\rho
}$, Bakry and Emery arrive at the following dimension and weight dependent
definition of the Ricci tensor:
\begin{align}
\mathrm{Ric}_{N}  &  =\left\{
\begin{array}
[c]{ll}%
\mathrm{Ric}+\mathrm{Hess}_{\rho} & \text{if}~N=\infty,\\
\mathrm{Ric}+\mathrm{Hess}_{\rho}-\frac{1}{N-n}(d\rho\otimes d\rho) &
\text{if}~n<N<\infty,\\
\mathrm{Ric}+\mathrm{Hess}_{\rho}-\infty(d\rho\otimes d\rho) & \text{if}%
~N=n,\\
-\infty & \text{if}~N<n,
\end{array}
\right.
\end{align}
and moreover, they showed the equivalence between the $CD(k,N)$ condition
\eqref{cdkn} and the bound $\mathrm{Ric}_{N}\ge k$.

\subsection{Iterated Carr\'{e} du Champ and Coarse Ricci Curvature}

In this section we provide a definition of coarse Ricci curvature on general
metric spaces with a given operator. In most cases of interest, this operator
will be invariant with respect to a measure on the space. The definition will
provide a coarse Ricci function given any operator.

Consider the function
\begin{equation}
f_{x,y}(z)=\frac{1}{2}\left(  d^{2}(x,y)-d^{2}(y,z)+d^{2}(z,x)\right)
\label{testdef}%
\end{equation}
One can check that, in the Euclidean case, this is simply the linear function
with gradient $x-y.$ One can also check that for $x$ very near $y$ on a
Riemannian manifold, this is (up to high order) the corresponding function in
normal coordinates at $x$ determined by the normal coordinates of $y.$ This
leads us to the following definition of coarse Ricci curvature. \ 

First, we need the following definition of cut locus, which will work for an
arbitrary metric space with an operator $L.$

\begin{definition}
For a distance function on an arbitrary metric space with an operator $L$, we
define the cut locus as
\[
\mathfrak{C=}\left\{  \left(  x,y\right)  \in X\times X:\Gamma_{2}%
(L,f_{x,y},f_{x,y})\text{ is not defined at either of }x\text{ or }y\right\}
\]

\end{definition}

\begin{remark}
This agrees with the definition of cut locus on Riemannian manifolds. \ 
\end{remark}

\begin{definition}
Given an operator $L$ we define the coarse Ricci curvature for $L$ as follows.
\ If $\left(  x,y\right)  \notin\mathfrak{C}$ we define
\begin{equation}
\mathrm{Ric}_{L}(x,y)=\Gamma_{2}(L,f_{x,y},f_{x,y})(x). \label{RicDef}%
\end{equation}

\end{definition}

In order to check that this is consistent with the classical notions, note
that this defines a coarse Ricci curvature on a Riemannian manifold as%
\[
\mathrm{Ric}_{\triangle_{g}}(x,y)=\Gamma_{2}(\Delta_{g},f_{x,y},f_{x,y})(x).
\]
We record that, in particular, $\mathrm{Ric}_{\Delta_{g}}$ has the following
property of coarse Ricci curvature on smooth Riemannian manifolds.

\begin{theorem}
\label{coarse-to-ricci} Suppose that $M$ is a smooth Riemannian manifold. Let
$\gamma\left(  s\right)  $ be a smooth curve with $\gamma^{\prime}\left(
0\right)  \in T_{x}M.$ \ Then
\begin{equation}
\mathrm{Ric}(\gamma^{\prime}\left(  0\right)  ,\gamma^{\prime}\left(
0\right)  )=\frac{1}{2}\frac{d^{2}}{ds^{2}}\mathrm{Ric}_{\triangle_{g}%
}(x,\gamma\left(  s\right)  ). \label{RecoverRicci}%
\end{equation}

\end{theorem}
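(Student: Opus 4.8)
The plan is to compute $\mathrm{Ric}_{\triangle_g}(x,\gamma(s)) = \Gamma_2(\Delta_g, f_{x,\gamma(s)}, f_{x,\gamma(s)})(x)$ as a function of $s$ near $s=0$, using the Bochner formula identity
\[
\Gamma_2(\Delta_g, u, u) = \mathrm{Ric}(\nabla u, \nabla u) + \|\mathrm{Hess}_u\|^2 ,
\]
with $u = f_{x,\gamma(s)}$, and then extract the coefficient of $s^2$ in the Taylor expansion. Write $y = \gamma(s)$ and recall $f_{x,y}(z) = \tfrac12\big(d^2(x,y) - d^2(y,z) + d^2(z,x)\big)$. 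The first step is to establish the key local facts about $f_{x,y}$ evaluated at $z=x$: namely that $\nabla_z f_{x,y}(x)$ is the initial velocity of the minimizing geodesic from $x$ to $y$ (so its norm is $d(x,y)$), and that $\mathrm{Hess}_z f_{x,y}(x) \to \mathrm{Hess}_z\big(\tfrac12 d^2(x,\cdot)\big)(x) = g_x$ as $y \to x$, with an expansion whose correction terms are $O(d(x,y)^2)$ and governed by curvature (via the standard Riemann-normal-coordinate expansion of the squared distance function, $d^2(z,x) = |v|^2 - \tfrac13 R(v, \cdot, \cdot, v) + \cdots$). These are exactly the ``up to high order'' remarks made before the definition.

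Next I would parametrize: choose normal coordinates centered at $x$, let $v(s)$ be the vector with $\exp_x(v(s)) = \gamma(s)$, so $v(0) = 0$ and $v'(0) = \gamma'(0)$. Then $f_{x,\gamma(s)}$ near $x$ is, to the order that matters, the linear function $z \mapsto \langle v(s), z\rangle$ plus curvature corrections of order $|v(s)|^2$. Feeding this into $\Gamma_2(\Delta_g, \cdot, \cdot)(x) = \mathrm{Ric}(\nabla f, \nabla f)(x) + \|\mathrm{Hess}_f(x)\|^2$: the first term contributes $\mathrm{Ric}_x(\nabla f_{x,\gamma(s)}(x), \nabla f_{x,\gamma(s)}(x))$, which to leading order is $\mathrm{Ric}_x(v(s), v(s))$, hence has $s^2$-coefficient $\mathrm{Ric}_x(\gamma'(0),\gamma'(0))$ plus a contribution from $v''(0)$ that is linear in $v(0)=0$ and therefore vanishes. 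The second term $\|\mathrm{Hess}_{f_{x,\gamma(s)}}(x)\|^2$ equals $\|g_x + O(|v(s)|^2)\|^2 = n + O(|v(s)|^4)$, since the Hessian of the quadratic part $\langle v(s), \cdot\rangle$ vanishes identically and the curvature correction to the Hessian of $f_{x,y}$ at $x$ is $O(d(x,y)^2) = O(|v(s)|^2) = O(s^2)$; squaring the Hilbert–Schmidt norm of $g_x$ plus an $O(s^2)$ symmetric tensor gives $n$ plus an $O(s^2)$ term whose coefficient is $2\langle g_x, (\text{the } s^2\text{-coefficient of }\mathrm{Hess})\rangle$ — so here is the one place a genuine computation is needed: one must check that this cross term vanishes. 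It does, because the $O(d(x,y)^2)$ correction to $\mathrm{Hess}_z f_{x,y}(x)$ is (by the symmetry of $f_{x,y}$ in the roles and the normal-coordinate expansion) trace-free against $g_x$ at this order — this is where the precise $-\tfrac13 R(v,\cdot,\cdot,v)$ form of the distance expansion enters, its $g$-trace being $\tfrac13 \mathrm{Ric}(v,v)$, and one tracks how the three squared-distance terms in $f_{x,y}$ combine. Therefore $\tfrac{d^2}{ds^2}\big|_{0}\|\mathrm{Hess}\|^2 = 0$, and $\tfrac12 \tfrac{d^2}{ds^2}\big|_0 \mathrm{Ric}_{\triangle_g}(x,\gamma(s)) = \tfrac12 \cdot 2\,\mathrm{Ric}_x(\gamma'(0),\gamma'(0)) = \mathrm{Ric}_x(\gamma'(0),\gamma'(0))$.

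The main obstacle is the bookkeeping in the second-order Taylor expansion of $f_{x,y}$ and its derivatives at $z = x$: one needs the expansion of $d^2$ in Riemann normal coordinates to fourth order (so that Hessians are controlled to second order in $d(x,y)$), and one needs to confirm that the curvature corrections to $\nabla f$ and $\mathrm{Hess}_f$ enter at an order high enough that, after the two differentiations in $s$ and evaluation at $s=0$ (where $v(0)=0$), only the clean terms survive — in particular that the $\mathrm{Hess}$-squared term contributes nothing and the $\mathrm{Ric}(\nabla f, \nabla f)$ term contributes exactly $\mathrm{Ric}_x(\gamma'(0), \gamma'(0))$. Everything else (the semigroup/Bochner identity, the agreement of $\mathfrak{C}$ with the classical cut locus so that $(x,\gamma(s)) \notin \mathfrak{C}$ for small $s$) is already available from the material above.
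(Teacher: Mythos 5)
Your overall strategy coincides with the paper's: work in normal coordinates at $x$, use the Bochner identity $\Gamma_2(\Delta_g,f,f)=\mathrm{Ric}(\nabla f,\nabla f)+\|\mathrm{Hess}_f\|^2$ applied to $f_{x,\gamma(s)}$, invoke Synge's expansion of the squared distance, and Taylor-expand in $s$. Your treatment of the first term is correct: $\nabla_z f_{x,y}(x)=\pm Y$ exactly, so it contributes $\mathrm{Ric}_x(v(s),v(s))$ whose second $s$-derivative at $0$ is $2\,\mathrm{Ric}_x(\gamma'(0),\gamma'(0))$ since $v(0)=0$. However, your treatment of the $\|\mathrm{Hess}\|^2$ term contains a genuine error. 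You claim $\mathrm{Hess}_z f_{x,y}(x)\to \mathrm{Hess}_z\bigl(\tfrac12 d^2(x,\cdot)\bigr)(x)=g_x$ as $y\to x$. This is false: as $y\to x$ the function $f_{x,y}$ tends to $\tfrac12\bigl(-d^2(x,z)+d^2(z,x)\bigr)\equiv 0$, because the Hessians of the two squared-distance terms cancel at leading order. In normal coordinates at $x$ one has $(f_{x,y})_{ij}(x)=\delta_{ij}-\bigl(\tfrac12 d^2(y,\cdot)\bigr)_{ij}(x)$, and Synge's expansion gives $\bigl(\tfrac12 d^2(y,\cdot)\bigr)_{ij}(x)=\delta_{ij}+O(|Y|^2)$, so $\mathrm{Hess}_z f_{x,y}(x)=O(|Y|^2)=O(s^2)$ and therefore $\|\mathrm{Hess}_{f_{x,\gamma(s)}}(x)\|^2=O(s^4)$, which contributes nothing to the second derivative at $s=0$. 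That is exactly how the paper disposes of this term, and it makes the whole ``cross term'' discussion unnecessary.

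Under your incorrect premise $\mathrm{Hess}=g_x+A$ with $A=O(s^2)$, the dangerous contribution is $2\langle g_x,A\rangle=2\,\mathrm{tr}_g A$, and you assert it vanishes because $A$ is ``trace-free against $g_x$.'' But your own parenthetical observes that the trace of the curvature correction $-\tfrac13 R(v,\cdot,\cdot,v)$ is a nonzero multiple of $\mathrm{Ric}(v,v)$, so the mechanism you invoke is self-contradictory; carried through literally, your computation would produce a spurious extra multiple of $\mathrm{Ric}(\gamma'(0),\gamma'(0))$ and the wrong constant in (\ref{RecoverRicci}). (There is also an internal inconsistency: you first describe $f_{x,\gamma(s)}$ near $x$ as a linear function plus $O(|v(s)|^2)$ corrections, which would give Hessian $O(s^2)$ --- the correct statement --- and then compute with Hessian $g_x+O(s^2)$.) The gap is entirely repairable: replace the claim $\mathrm{Hess}_z f_{x,y}(x)\to g_x$ with the correct leading-order cancellation, after which no trace computation is needed and the stated identity follows.
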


\begin{remark}
\emph{{ The function $\mathrm{Ric}_{L}(x,y)$ produced by (\ref{RicDef}) need
not be symmetric. \ Also note that \emph{ (\ref{testdef})} does not require
that the distance function be symmetric. \ } }
\end{remark}

A classical result of Synge in \cite{syn31} provides an expansion for the
square of the geodesic distance at a point in normal coordinates. \ This
allows us to prove the following proposition. \ The proof of Proposition
\ref{coarse-to-ricci} will follow. \ 

\begin{proposition}
Given points $x,y\in M$ let $Y$ represent the normal coordinate of $y$ in the
tangent space at $x,$ i.e. $\ y=\exp_{x}(Y).$ \ \ Then, \qquad%
\[
\mathrm{Ric}_{\Delta_{g}}(x,y)=\mathrm{Ric}(Y,Y)+\tilde{G}(Y,x)
\]
where $\tilde{G}(Y,x)$ vanishes to fourth order at $Y=0.$
\end{proposition}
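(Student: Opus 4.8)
The plan is to compute $\Gamma_2(\Delta_g, f_{x,y}, f_{x,y})(x)$ by expanding everything in normal coordinates centered at $x$, using Synge's expansion for $d^2$ as the starting point. First I would fix normal coordinates $z^i$ at $x$ and write $Y$ for the coordinates of $y$, so $d^2(x,z) = |z|^2 + O(|z|^4)$ with the fourth-order correction governed by the curvature tensor at $x$ (the Synge/Gauss lemma expansion $d^2(x,z) = |z|^2 - \tfrac13 R_{iklj}z^iz^kz^l z^j/\ldots + \ldots$, with the precise coefficient being the standard one). The function $f_{x,y}(z) = \tfrac12(d^2(x,y) - d^2(y,z) + d^2(z,x))$ then has, at $z$ near $x$, gradient approximately $Y$ (as the paper already notes: in the Euclidean case it is exactly linear with gradient $x-y$), and I need its Taylor expansion in $z$ up to the order that feeds into $\Gamma_2$.

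Next I would recall from the Bochner formula established in the excerpt that
\[
\Gamma_2(\Delta_g, f, f)(x) = \mathrm{Ric}_x(\nabla f, \nabla f) + \|\mathrm{Hess}_f\|_g^2 \big|_x,
\]
so I need two things at the point $x$: the gradient $\nabla f_{x,y}$ and the Hessian $\mathrm{Hess}_{f_{x,y}}$. For the gradient: differentiating $f_{x,y}$ in $z$ and evaluating at $z=x$, the term $\tfrac12 d^2(x,z)$ contributes zero gradient at $z=x$ (it has a critical point there), while $-\tfrac12 d^2(y,z)$ contributes $\tfrac12 \nabla_z d^2(z,y)|_{z=x}$, which by the first variation of arc length is $-(\text{the initial velocity of the geodesic from }x\text{ to }y)$, i.e. exactly $Y$ in normal coordinates, up to corrections vanishing to appropriate order in $Y$. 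So $\nabla f_{x,y}(x) = Y + O(|Y|^{?})$, and plugging into the Ricci term gives $\mathrm{Ric}(Y,Y)$ plus an error I must show is $O(|Y|^4)$. For the Hessian: the key observation is that $\mathrm{Hess}_{d^2(x,\cdot)}$ at $z=x$ equals $2g$ (to leading order, with $O(|Y|^2)$ corrections), and similarly $\mathrm{Hess}_{d^2(y,\cdot)}$ at $z=x$ equals $2g + O(|Y|^2)$ by Synge's expansion — these two Hessians cancel to leading order in $f_{x,y}$, leaving $\mathrm{Hess}_{f_{x,y}}(x) = O(|Y|^2)$, hence $\|\mathrm{Hess}_{f_{x,y}}\|^2 = O(|Y|^4)$. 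Both error terms then assemble into the claimed $\tilde G(Y,x)$ vanishing to fourth order at $Y=0$.

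The main obstacle is bookkeeping the orders carefully: I need the expansion of $d^2(y,z)$ in $z$ near $x$ as a function of the separation $|Y|$, and I must track which terms in $\nabla f$ are $O(|Y|)$ with an $O(|Y|^3)$ correction (so that $\mathrm{Ric}(\nabla f,\nabla f) = \mathrm{Ric}(Y,Y) + O(|Y|^4)$) versus which terms in $\mathrm{Hess}_f$ are genuinely $O(|Y|^2)$. The cleanest route is to use Synge's formula directly: write $d^2(y,z)$ using the expansion at $x$ with the roles organized so that $z-x$ and $Y$ both appear, then subtract. A subtle point worth isolating is that $f_{x,y}$ is built to be the "correct" normal-coordinate linear function to high order (as the paper remarks), so in fact $\nabla f_{x,y}(x) = Y$ may hold with error even better than naively expected; but for the proposition only the fourth-order vanishing of the total error $\tilde G$ is needed, so I would not chase sharper estimates. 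Once the expansion is in hand, Theorem \ref{coarse-to-ricci} follows immediately by differentiating $s \mapsto \mathrm{Ric}_{\Delta_g}(x,\gamma(s)) = \mathrm{Ric}(Y(s),Y(s)) + \tilde G(Y(s),x)$ twice at $s=0$, since $Y(0)=0$, $Y'(0) = \gamma'(0)$, and $\tilde G$'s fourth-order vanishing kills its contribution to the second derivative.
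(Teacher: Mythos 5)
Your proposal is correct and follows essentially the same route as the paper: the normal-coordinate form of the Bochner identity $\Gamma_2(f,f)(x)=\mathrm{Ric}(\nabla f,\nabla f)+\sum_{i,j}f_{ij}^2$, the exact identity $\nabla f_{x,y}(x)=Y$ from the first variation of $d^2(y,\cdot)$, and Synge's expansion giving $\bigl(\tfrac12 d^2(y,z)\bigr)_{ij}=\delta_{ij}+O(Y^2)$ so that the Hessian term is $O(|Y|^4)$. The only cosmetic difference is that you hedge on whether the gradient equals $Y$ exactly, whereas the paper uses the exact identity; either way the argument closes.
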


\begin{proof}
In general, for a function $h,~$we can compute in normal coordinates at
$(x=0)$
\[
\Gamma_{2}(h,h)(0)=\sum_{i,j}\left(  h_{ij}^{2}+\mathrm{Ric}_{ij}h_{i}%
h_{j}\right)
\]
In normal coordinates at $x,$ for $f_{x,y}$ given by (\ref{testdef}) the
coordinate derivatives can be computed as
\begin{align*}
\left(  f_{x,y}\right)  _{i} &  =-\left(  \frac{1}{2}d^{2}(y,z)\right)
_{i}+z_{i}\\
\left(  f_{x,y}\right)  _{ij} &  =-\left(  \frac{1}{2}d^{2}(y,z)\right)
_{ij}+\delta_{ij}%
\end{align*}
so we have at the origin $z=0$
\[
-\left(  \frac{1}{2}d^{2}(y,z)\right)  _{i}=Y^{i}%
\]%
\[
\Gamma_{2}(f_{x,y},f_{x,y})(0)=\sum_{i,j}\left(  \left[  \delta_{ij}-\left(
\frac{1}{2}d^{2}(y,z)\right)  _{ij}\right]  ^{2}+Ric_{ij}Y^{i}Y^{j}\right)
\]
in other words,
\[
\mathrm{Ric}_{\Delta_{g}}(x,y)=\mathrm{Ric}(Y,Y)+\sum_{i,j}\left[  \delta
_{ij}-\left(  \frac{1}{2}d^{2}(y,z)\right)  _{ij}\right]  ^{2}%
\]
where $Y$ is the coordinate of $y$ in normal coordinates at $x.$ \ 

Following for example, \cite[158--161]{PP} or originally \cite{syn31} , we see
that%
\[
\left(  \frac{1}{2}d^{2}(y,z)\right)  _{ij}=\delta_{ij}+O(Y^{2})
\]
and the conclusion follows.
\end{proof}

\bigskip We now prove Theorem \ref{coarse-to-ricci} .

\begin{proof}
Compute using normal coordinates with $Y=\gamma(s):$%
\[
\mathrm{Ric}_{\triangle_{g}}(x,\gamma\left(  s\right)  )=\mathrm{Ric}%
(\gamma\left(  s\right)  ,\gamma\left(  s\right)  )+\sum_{i,j}\left[
\delta_{ij}-\left(  \frac{1}{2}d^{2}(\gamma\left(  s\right)  ,z)\right)
_{ij}\right]  ^{2}.
\]
Differentiate this%
\[
\frac{d}{ds}\mathrm{Ric}_{\triangle_{g}}(x,\gamma\left(  s\right)
)=2\mathrm{Ric}(\gamma^{\prime}\left(  s\right)  ,\gamma\left(  s\right)
)+\frac{d}{ds}\left(  \tilde{G}(\gamma\left(  s\right)  ,x)\right)  ,
\]
and again%
\begin{align}
\frac{d^{2}}{ds^{2}}\mathrm{Ric}_{\triangle_{g}}(x,\gamma\left(  s\right)  )
&  =2\mathrm{Ric}(\gamma^{\prime}\left(  s\right)  ,\gamma^{\prime}\left(
s\right)  )+2\mathrm{Ric}(\gamma\left(  s\right)  ,\gamma^{\prime\prime
}\left(  s\right)  )\label{secondder}\\
&  +\frac{d^{2}}{ds^{2}}\left(  \tilde{G}(\gamma\left(  s\right)  ,x)\right)
.
\end{align}
Then plugging in $\gamma\left(  0\right)  =0$ we get%
\[
\frac{d^{2}}{ds^{2}}\mathrm{Ric}_{\triangle_{g}}(x,\gamma\left(  s\right)
)=2\mathrm{Ric}(\gamma^{\prime}\left(  s\right)  ,\gamma^{\prime}\left(
s\right)  ).
\]

\end{proof}

The following is natural, considering the analogy between coarse Ricci
curvature and the distance function. \ 

\begin{theorem}
Suppose that $\left(  M,g\right)  $ is a Riemannian manifold. \ Then
\begin{equation}
\mathrm{Ric}\geq K\label{RicLB}%
\end{equation}
if and only if%
\[
\mathrm{Ric}_{\Delta_{g}}(x,y)\geq Kd^{2}(x,y).
\]

\end{theorem}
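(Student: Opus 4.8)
The plan is to exploit the two expansions already in hand. From the Proposition we have, writing $Y$ for the normal coordinate of $y$ at $x$,
\[
\mathrm{Ric}_{\Delta_g}(x,y)=\mathrm{Ric}(Y,Y)+\tilde G(Y,x),
\]
with $\tilde G$ vanishing to fourth order at $Y=0$. So the two statements to compare are, on one side, the pointwise tensor inequality $\mathrm{Ric}_x\geq Kg_x$ for all $x$, and on the other, $\mathrm{Ric}(Y,Y)+\tilde G(Y,x)\geq K|Y|^2$ for all $x$ and all $Y$ in the domain of the exponential map (equivalently for all $(x,y)\notin\mathfrak C$), since $d^2(x,y)=|Y|^2$ in these coordinates.

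The easy direction is ($\Leftarrow$). Assume $\mathrm{Ric}_{\Delta_g}(x,y)\geq Kd^2(x,y)$ everywhere. Fix $x$ and a unit vector $v\in T_xM$; apply the inequality along $y=\gamma(s)$ with $\gamma$ the geodesic $\gamma(s)=\exp_x(sv)$, so $Y=sv$ and $d^2(x,\gamma(s))=s^2$. Then
\[
\mathrm{Ric}(sv,sv)+\tilde G(sv,x)\geq Ks^2,
\]
i.e. $s^2\big(\mathrm{Ric}(v,v)-K\big)+\tilde G(sv,x)\geq 0$. Divide by $s^2$ and let $s\to0$; since $\tilde G$ vanishes to fourth order, $\tilde G(sv,x)/s^2\to 0$, and we conclude $\mathrm{Ric}(v,v)\geq K$. (Equivalently, one can quote Theorem~\ref{coarse-to-ricci}: the second $s$-derivative at $0$ of $\mathrm{Ric}_{\Delta_g}(x,\gamma(s))$ is $2\,\mathrm{Ric}(v,v)$, while that of $Kd^2(x,\gamma(s))=Ks^2$ is $2K$, and the inequality together with equality at $s=0$ forces the inequality of second derivatives — though the direct Taylor argument is cleaner and avoids worrying about whether $\gamma$ is a geodesic.)

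The harder direction is ($\Rightarrow$), and this is where the main obstacle lies: $\mathrm{Ric}\geq Kg$ controls only the quadratic part $\mathrm{Ric}(Y,Y)\geq K|Y|^2$, but the claimed inequality involves the full function including the error $\tilde G$, which a priori could be negative and need not be dominated by anything for $Y$ away from $0$. The resolution must come from a \emph{global} comparison result rather than a local Taylor estimate: under $\mathrm{Ric}\geq Kg$, the function $f_{x,y}$ of (\ref{testdef}) — which in normal coordinates at $x$ is $|z|^2/2 - \frac12 d^2(y,z) + \text{const}$ — has a Hessian at $x$ that is controlled by Ricci comparison (Laplacian comparison / Hessian comparison for $d^2$). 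Concretely I would revisit the identity
\[
\mathrm{Ric}_{\Delta_g}(x,y)=\mathrm{Ric}(Y,Y)+\sum_{i,j}\Big[\delta_{ij}-\big(\tfrac12 d^2(y,z)\big)_{ij}\Big]^2
\]
from the proof of the Proposition: the first term is $\geq K|Y|^2 = Kd^2(x,y)$ by hypothesis, and the second term is a sum of squares, hence $\geq 0$. Therefore $\mathrm{Ric}_{\Delta_g}(x,y)\geq Kd^2(x,y)$ immediately, with no need to estimate $\tilde G$ at all — the nonnegativity of the Hessian-defect term does all the work. So the proof is genuinely short once one uses the sum-of-squares form rather than the $\tilde G$ form; the only point requiring care is that this identity is valid at every $(x,y)\notin\mathfrak C$ (where $\frac12 d^2(y,\cdot)$ is smooth near $x$), which is exactly the content of the cut-locus definition, and I would remark on that to close the argument.
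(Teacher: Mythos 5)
Your proof is correct and takes essentially the same route as the paper: for the forward direction the paper applies the Bochner-derived Bakry--Emery inequality $\Gamma_{2}(f,f)\geq K\Gamma(f,f)$ to $f_{x,y}$ and computes $\Gamma(\Delta_{g},f_{x,y},f_{x,y})(x)=\Vert\nabla f_{x,y}(x)\Vert^{2}=\Vert Y\Vert^{2}=d^{2}(x,y)$, which is exactly your normal-coordinate identity written invariantly (discarding the nonnegative $\Vert\mathrm{Hess}\Vert^{2}$ term is your discarding of the sum of squares $\sum_{i,j}[\delta_{ij}-(\tfrac{1}{2}d^{2}(y,z))_{ij}]^{2}$). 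For the reverse direction the paper simply quotes the second-derivative recovery formula (\ref{RecoverRicci}), which is the invariant form of your Taylor-quotient argument along $\gamma(s)=\exp_{x}(sv)$; your observation that the $\tilde{G}$ expansion alone would not give the global inequality, and that the sum-of-squares (Bochner) form is what makes the forward direction work, is precisely the point of the paper's proof.
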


\begin{proof}
First we show that the first condition implies the second. \ It follows from
the Bochner formula that (\ref{RicLB}) implies the Bakry-Emery condition%
\[
\Gamma_{2}(f,f)\geq K\Gamma(f,f)
\]
in particular
\begin{align*}
\mathrm{Ric}_{\Delta_{g}}(x,y)  &  =\Gamma_{2}(\Delta_{g},f_{x,y}%
,f_{x,y})(x)\\
&  \geq K\Gamma(\Delta_{g},f_{x,y},f_{x,y})(x)\\
&  =\left\vert \nabla f_{x,y}\right\vert ^{2}(x).
\end{align*}
In normal coordinates at $x$ it is easy to compute the gradient%
\[
\nabla f_{x,y}(x)=-Y
\]
so
\[
\left\Vert \nabla f_{x,y}(x)\right\Vert ^{2}=\left\Vert Y\right\Vert
^{2}=d^{2}(x,y)
\]
and the conclusion follows.

The reverse implication follows from (\ref{RecoverRicci}).
\end{proof}

\begin{theorem}
\bigskip Let
\[
\Delta_{\rho}v=\Delta_{g}v-\langle\nabla\rho,\nabla v\rangle_{g}%
\]
be the weighted Laplacian and let
\[
\mathrm{Ric}_{\infty}=\mathrm{Ric+}\nabla_{g}^{2}\rho.
\]

\end{theorem}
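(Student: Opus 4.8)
The plan is to rerun the arguments given above for $\Delta_{g}$, replacing the ordinary Bochner formula by its weighted (Bakry--Emery) counterpart. The one genuinely new ingredient is the identity
\[
\Gamma_{2}(\Delta_{\rho},f,f)=\Vert\mathrm{Hess}_{f}\Vert_{g}^{2}+\mathrm{Ric}_{\infty}(\nabla f,\nabla f),
\]
which I would establish first. Since the drift field $-\nabla\rho$ is a derivation, the carré du champ is unchanged, $\Gamma(\Delta_{\rho},u,v)=\langle\nabla u,\nabla v\rangle_{g}$, so expanding $\Gamma_{2}(\Delta_{\rho},f,f)=\tfrac{1}{2}\Delta_{\rho}(|\nabla f|^{2})-\langle\nabla\Delta_{\rho}f,\nabla f\rangle_{g}$ and comparing with the $\Delta_{g}$-Bochner formula, the two first-order contributions produced by the occurrences of $\langle\nabla\rho,\nabla(\cdot)\rangle$ collapse: the terms of the form $\mathrm{Hess}_{f}(\nabla\rho,\nabla f)$ cancel, and what survives of the drift is exactly $\mathrm{Hess}_{\rho}(\nabla f,\nabla f)$, i.e. $\mathrm{Ric}_{\infty}-\mathrm{Ric}$. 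I would also record that the cut locus of $(M,g,e^{-\rho}d\mathrm{vol})$ in the sense of the definition above coincides with the Riemannian cut locus, since $\rho$ is smooth and the only non-smoothness in $\Gamma_{2}(\Delta_{\rho},f_{x,y},f_{x,y})$ comes from $d^{2}$.

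With this in hand I would repeat the proof of the Proposition above. In $g$-normal coordinates at $x$ one has $\nabla f_{x,y}(x)=-Y$ and $(\mathrm{Hess}_{f_{x,y}})_{ij}(x)=\delta_{ij}-\left(\tfrac{1}{2}d^{2}(y,z)\right)_{ij}\big|_{z=0}$, which is $O(|Y|^{2})$ by the Synge expansion already cited. Substituting into the weighted Bochner identity,
\[
\mathrm{Ric}_{\Delta_{\rho}}(x,y)=\Gamma_{2}(\Delta_{\rho},f_{x,y},f_{x,y})(x)=\mathrm{Ric}_{\infty}(Y,Y)+\tilde{G}(Y,x),
\]
where $\tilde{G}(Y,x)=\sum_{i,j}\left[\delta_{ij}-\left(\tfrac{1}{2}d^{2}(y,z)\right)_{ij}\right]^{2}$ is the very same quartic error function as in the unweighted case, since the only error term is $\Vert\mathrm{Hess}_{f_{x,y}}\Vert^{2}$ and the Hessian of $f_{x,y}$ does not see $\rho$.

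The two expected conclusions then follow exactly as before. Writing $Y=\gamma(s)$ and differentiating $\mathrm{Ric}_{\Delta_{\rho}}(x,\gamma(s))=\mathrm{Ric}_{\infty}(\gamma(s),\gamma(s))+\tilde{G}(\gamma(s),x)$ twice in $s$ and setting $s=0$, the $\tilde{G}$ term drops by fourth-order vanishing and the term $2\,\mathrm{Ric}_{\infty}(\gamma(s),\gamma''(s))$ drops since $\gamma(0)=0$, giving $\mathrm{Ric}_{\infty}(\gamma'(0),\gamma'(0))=\tfrac{1}{2}\tfrac{d^{2}}{ds^{2}}\big|_{0}\mathrm{Ric}_{\Delta_{\rho}}(x,\gamma(s))$, the analog of Theorem \ref{coarse-to-ricci}. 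For the lower-bound statement, $\mathrm{Ric}_{\infty}\geq K$ is equivalent to the Bakry--Emery condition $\Gamma_{2}(\Delta_{\rho},f,f)\geq K\,\Gamma(\Delta_{\rho},f,f)$; applying this to $f=f_{x,y}$ at the point $x$, where $\Gamma(\Delta_{\rho},f_{x,y},f_{x,y})(x)=|\nabla f_{x,y}(x)|^{2}=|Y|^{2}=d^{2}(x,y)$, yields $\mathrm{Ric}_{\Delta_{\rho}}(x,y)\geq K\,d^{2}(x,y)$, and the reverse implication is read off from the recovery formula. I do not anticipate a real obstacle: the one delicate point is the bookkeeping of the first-order terms in the weighted Bochner computation — in particular checking the cancellation of the $\mathrm{Hess}_{f}(\nabla\rho,\nabla f)$ terms — everything else being a transcription of the $\Delta_{g}$ case.
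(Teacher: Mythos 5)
Your proposal is correct and follows exactly the route the paper takes: the paper's proof consists of the single observation that the Bochner formula becomes $\Gamma_{2}(\Delta_{\rho},f,f)=\Vert\nabla_{g}^{2}f\Vert^{2}+\mathrm{Ric}_{\infty}(\nabla f,\nabla f)$, after which the earlier arguments are repeated verbatim. You merely supply the details the paper leaves implicit (the cancellation of the $\mathrm{Hess}_{f}(\nabla\rho,\nabla f)$ terms, the invariance of $\Gamma$ and of the error term $\tilde{G}$), and these checks are all accurate.
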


\begin{proposition}
Then
\[
\mathrm{Ric}_{\infty}(\gamma^{\prime}\left(  0\right)  ,\gamma^{\prime}\left(
0\right)  )=\frac{1}{2}\frac{d^{2}}{ds^{2}}\mathrm{Ric}_{\Delta_{\rho}%
}(x,\gamma\left(  s\right)  ).
\]
and%
\begin{equation}
\mathrm{Ric}_{\infty}\geq K
\end{equation}
if and only if%
\[
\mathrm{Ric}_{\Delta_{\rho}}(x,y)\geq Kd^{2}(x,y).
\]

\end{proposition}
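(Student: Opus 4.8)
The plan is to repeat, with only cosmetic changes, the arguments already given for $\Delta_g$: the Proposition relating $\mathrm{Ric}_{\Delta_g}(x,y)$ to $\mathrm{Ric}(Y,Y)$, Theorem \ref{coarse-to-ricci}, and the theorem characterizing $\mathrm{Ric}\ge K$. The single new ingredient is the weighted Bochner (Bakry-Emery) identity
\[
\Gamma_2(\Delta_\rho, u, u) = \mathrm{Ric}_\infty(\nabla u, \nabla u) + \Vert \mathrm{Hess}_g\, u\Vert_g^2,
\]
which I would derive first. Because $\Delta(uv) = u\Delta v + v\Delta u + 2\langle\nabla u,\nabla v\rangle$ and $\nabla(uv) = u\nabla v + v\nabla u$, a one-line computation shows $\Gamma(\Delta_\rho, u, v) = \langle\nabla u,\nabla v\rangle$, exactly as for $\Delta_g$; in particular the carr\'{e} du champ, the gradient of $f_{x,y}$, and the cut locus are unchanged. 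Starting from $\Gamma_2(\Delta_\rho, u, u) = \tfrac12\Delta_\rho\langle\nabla u,\nabla u\rangle - \langle\nabla u,\nabla\Delta_\rho u\rangle$, expanding $\Delta_\rho = \Delta - \langle\nabla\rho,\nabla(\cdot)\rangle$, applying the ordinary Bochner formula to the $\Delta$ part, and using $\tfrac12\nabla\langle\nabla u,\nabla u\rangle = \mathrm{Hess}_g u(\nabla u,\cdot)$ together with $\nabla\langle\nabla\rho,\nabla u\rangle = \mathrm{Hess}_g\rho(\nabla u,\cdot) + \mathrm{Hess}_g u(\nabla\rho,\cdot)$, the two $\nabla\rho$ cross terms cancel and $\mathrm{Ric} + \mathrm{Hess}_g\rho = \mathrm{Ric}_\infty$ remains in the curvature slot. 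The Hessian here is the Riemannian one, which is the only point in the computation that needs attention.

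Next I would prove the weighted analogue of the normal-coordinate formula for $\mathrm{Ric}_{\Delta_g}(x,y)$. In $g$-normal coordinates at $x$ (where the Christoffel symbols vanish, so the Riemannian Hessian of any function equals its coordinate Hessian at the origin), the weighted Bochner identity gives
\[
\mathrm{Ric}_{\Delta_\rho}(x,y) = \Gamma_2(\Delta_\rho, f_{x,y}, f_{x,y})(x) = \mathrm{Ric}_\infty(Y,Y) + \sum_{i,j}\Big[\delta_{ij} - \big(\tfrac12 d^2(y,z)\big)_{ij}\Big]^2,
\]
where $y = \exp_x(Y)$ and I use $\nabla f_{x,y}(x) = -Y$ and $(f_{x,y})_{ij}(x) = \delta_{ij} - (\tfrac12 d^2(y,z))_{ij}$ exactly as in the unweighted proof. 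Synge's expansion $(\tfrac12 d^2(y,z))_{ij} = \delta_{ij} + O(|Y|^2)$ then gives $\mathrm{Ric}_{\Delta_\rho}(x,y) = \mathrm{Ric}_\infty(Y,Y) + \tilde G(Y,x)$ with $\tilde G$ vanishing to fourth order in $Y$. The first assertion of the Proposition follows just as Theorem \ref{coarse-to-ricci} followed from its Proposition: put $Y = \gamma(s)$, $\gamma(0) = x$, differentiate twice in $s$, and observe that the term $\mathrm{Ric}_\infty(\gamma(s),\gamma''(s))$ and the two $s$-derivatives of $\tilde G$ all vanish at $s = 0$ because each retains a factor $\gamma(0) = 0$, leaving $\tfrac12\tfrac{d^2}{ds^2}\mathrm{Ric}_{\Delta_\rho}(x,\gamma(s))\big|_{s=0} = \mathrm{Ric}_\infty(\gamma'(0),\gamma'(0))$.

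For the equivalence, the forward implication is immediate: $\mathrm{Ric}_\infty\ge K$ plus the weighted Bochner identity give $\Gamma_2(\Delta_\rho, f, f)\ge K\Gamma(\Delta_\rho, f, f) = K|\nabla f|^2$ for all $f$, hence $\mathrm{Ric}_{\Delta_\rho}(x,y)\ge K|\nabla f_{x,y}(x)|^2 = K|Y|^2 = Kd^2(x,y)$. For the converse, fix $x$ and a smooth curve $\gamma$ with $\gamma(0) = x$, and set $\phi(s) = \mathrm{Ric}_{\Delta_\rho}(x,\gamma(s)) - Kd^2(x,\gamma(s))$; the hypothesis says $\phi\ge 0$ while $\phi(0) = 0$, so $\phi''(0)\ge 0$. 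Combining the first assertion with the elementary identity $\tfrac12\tfrac{d^2}{ds^2}d_g^2(x,\gamma(s))\big|_{s=0} = g(\gamma'(0),\gamma'(0))$ recalled in the introduction, this reads $\mathrm{Ric}_\infty(\gamma'(0),\gamma'(0))\ge K g(\gamma'(0),\gamma'(0))$, and since $x$ and $\gamma'(0)$ are arbitrary, $\mathrm{Ric}_\infty\ge K$. I expect no genuine obstacle: the whole statement reduces to the weighted Bochner identity together with a transcription of proofs already in the paper, and the only place that demands a little care is the cancellation of the $\nabla\rho$ cross terms and the fact that the squared-Hessian term involves the Riemannian Hessian, so its normal-coordinate evaluation is unaffected by $\rho$.
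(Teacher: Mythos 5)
Your proposal is correct and follows exactly the route the paper intends: its entire proof is the remark that one should ``gently modify the above proofs'' using the weighted Bochner identity $\Gamma_{2}(\Delta_{\rho},f,f)=\Vert\nabla_{g}^{2}f\Vert^{2}+\mathrm{Ric}_{\infty}(\nabla f,\nabla f)$, and your write-up is precisely that modification carried out in detail (including the correct cancellation of the $\nabla\rho$ cross terms and the unchanged carr\'{e} du champ). No discrepancy to report.
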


\begin{proof}
This follows by gently modifying the above proofs, considering that the
Bochner formula becomes
\[
\Gamma_{2}(\Delta_{\rho},f,f)=\left\Vert \nabla_{g}^{2}f\right\Vert
^{2}+\mathrm{Ric}_{\infty}(\nabla f,\nabla f).
\]

\end{proof}

\section{Final Remarks}

\label{introFR}

At this point we can make comparisons to other definitions of coarse Ricci
curvature. Lin and Yau \cite{LY10}, following Chung and Yau \cite{CY96},
consider curvature dimension lower bounds of the form (\ref{cdkn}), in the
particular case where $L$ is the graph Laplacian using the standard distance
function on graphs. This class of metric spaces is quite restricted - in their
setting they are able to show that every locally finite graph satisfies a
$CD(2,-1)$ condition. The approach by Lin, Lu and Yau in \cite{LLY11} follows
the ideas of Ollivier \cite{Oll09}, using the $1$-Wasserstein distance
(denoted by $W_{1}$) instead of the $2$-Wasserstein distance. With this
metric, they compare mass distributions after short diffusion times, and take
the limit as the diffusion time approaches zero. They are able to show a
Bonnet-Myers type theorem which holds for graphs with positive Ricci
curvature. While the Bonnet-Myers result holds in the classical Riemannian
setting, it fails for general notions of Ricci curvature, for example, when
the Ricci curvature is derived from the standard Ornstein-Uhlenbeck process.
Ollivier's definition \cite[Definition 3]{Oll09} is very general, and also
uses the $W_{1}$-Wasserstein metric. Ollivier uses $\varepsilon$-geodesics to
obtain local-to-global results. Note that the notion of $\varepsilon
$-geodesics is stronger than $h$-rough geodesics seen in \cite{BS09}.

\subsection{Other definitions on Riemannian manifolds.}

One may also consider the following
\begin{equation}
\mathrm{cRic}(x,y)=\int_{0}^{1}\mathrm{Ric}_{\gamma(t)}(\dot{\gamma}%
(t),\dot{\gamma}(t))dt\label{icRic}%
\end{equation}
where $\gamma:[0,1]$ is the unique constant speed geodesic starting at $x$ and
ending at $y.$ \ \ One can check that this satisfies the coarse Ricci
curvature lower bound property. \ \ Further it satisfies the much nicer
property that if $g$ evolves by Ricci flow, i.e.
\[
\frac{d}{dt}g_{ij}=-2\mathrm{Ric}_{ij}%
\]
then
\begin{align*}
\frac{d}{dt}d^{2}(x,y) &  =\frac{d}{dt}\int_{0}^{1}g_{\gamma(t)}(\dot{\gamma
}(t),\dot{\gamma}(t))dt\\
&  =-\int_{0}^{1}2\mathrm{Ric}_{\gamma(t)}(\dot{\gamma}(t),\dot{\gamma
}(t))dt=-2\mathrm{cRic}(x,y)
\end{align*}
so this definition behaves nicely with respect to the Ricci flow. \ This
definition assumes existence and full knowledge of the Ricci tensor, so it may
be of little use for many purposes. \ 

\subsection{Lower Bounds and Log Sobolev inequalities}

From the theory of Gross in \cite{gross} and Bakry-Emery, one can show that
whenever a heat semi-group together with the invariant measure satisfies
standard properties (self adjointness, ergodicity, Leibnitz rule)\ a
Bakry-Emery condition of the form
\begin{equation}
\Gamma_{2}(f,f)\geq K\Gamma(f,f) \label{BELSCondition}%
\end{equation}
implies a log-Sobolev inequality of the form
\[
\int f\log fd\mu\leq\frac{1}{2K}\int\frac{\Gamma(f,f)}{f}d\mu
\]
for all $f>0$ with $\int fd\mu=1.$

Note that if a condition
\begin{equation}
\Gamma(L,f_{x,y},f_{x,y})(x)\geq d^{2}(x,y) \label{distcarre}%
\end{equation}
holds, then clearly (\ref{BELSCondition}) implies
\begin{equation}
\mathrm{Ric}_{L_{t}}(x,y)\geq Kd^{2}(x,y). \label{RLB}%
\end{equation}
However, it is not clear to us in general when the condition (\ref{distcarre})
holds or whether the condition (\ref{RLB}) implies a condition of the form
(\ref{BELSCondition}).

\bibliographystyle{amsalpha}
\bibliography{coarse_ricci}

\providecommand{\bysame}{\leavevmode\hbox to3em{\hrulefill}\thinspace}
\providecommand{\MR}{\relax\ifhmode\unskip\space\fi MR }
\providecommand{\MRhref}[2]{%
  \href{http://www.ams.org/mathscinet-getitem?mr=#1}{#2}
}
\providecommand{\href}[2]{#2}
\begin{thebibliography}{AGS12}

\bibitem[AGS12]{AGS12}
Luigi Ambrosio, Nicola Gigli, and Giusseppe Savar\'e, \emph{Bakry-\'{E}mery
  curvature-dimension condition and ´ {R}iemannian {R}icci curvature bound},
  arxiv:1209.5786, 2012.

\bibitem[AWa]{AW2}
A.~Ache and M.~Warren, \emph{Approximating coarse {R}icci curvature with
  applications to submanifolds of {E}uclidean space}, arXiv:1410.3351.

\bibitem[AWb]{AW}
\bysame, \emph{Coarse {R}icci curvature and the manifold learning problem},
  arXiv:1410.3351.

\bibitem[BBI01]{bbi}
Dmitri Burago, Yuri Burago, and Sergei Ivanov, \emph{A course in metric
  geometry}, Graduate Studies in Mathematics, vol.~33, American Mathematical
  Society, Providence, RI, 2001. \MR{1835418 (2002e:53053)}

\bibitem[B{\'E}85]{BE85}
D.~Bakry and Michel {\'E}mery, \emph{Diffusions hypercontractives}, S\'eminaire
  de probabilit\'es, {XIX}, 1983/84, Lecture Notes in Math., vol. 1123,
  Springer, Berlin, 1985, pp.~177--206. \MR{889476 (88j:60131)}

\bibitem[BN08]{BN08}
Mikhail Belkin and Partha Niyogi, \emph{Towards a theoretical foundation for
  {L}aplacian-based manifold methods}, J. Comput. System Sci. \textbf{74}
  (2008), no.~8, 1289--1308. \MR{2460286 (2009m:68207)}

\bibitem[BS09]{BS09}
Anca-Iuliana Bonciocat and Karl-Theodor Sturm, \emph{Mass transportation and
  rough curvature bounds for discrete spaces}, J. Funct. Anal. \textbf{256}
  (2009), no.~9, 2944--2966. \MR{2502429 (2010i:53066)}

\bibitem[CL06]{CoLaf06}
Ronald~R. Coifman and St{\'e}phane Lafon, \emph{Diffusion maps}, Appl. Comput.
  Harmon. Anal. \textbf{21} (2006), no.~1, 5--30. \MR{2238665 (2008a:60210)}

\bibitem[CY96]{CY96}
F.~R.~K. Chung and S.-T. Yau, \emph{Logarithmic {H}arnack inequalities}, Math.
  Res. Lett. \textbf{3} (1996), no.~6, 793--812. \MR{1426537 (97k:58182)}

\bibitem[EM12]{EM12}
Matthias Erbar and Jan Maas, \emph{Ricci curvature of finite {M}arkov chains
  via convexity of the entropy}, Arch. Ration. Mech. Anal. \textbf{206} (2012),
  no.~3, 997--1038. \MR{2989449}

\bibitem[GM13]{GM13}
Nicola Gigli and Jan Maas, \emph{Gromov-{H}ausdorff convergence of discrete
  transportation metrics}, SIAM J. Math. Anal. \textbf{45} (2013), no.~2,
  879--899. \MR{3045651}

\bibitem[Gro75]{gross}
Leonard Gross, \emph{Logarithmic {S}obolev inequalities}, Amer. J. Math.
  \textbf{97} (1975), no.~4, 1061--1083. \MR{0420249 (54 \#8263)}

\bibitem[JKO98]{JKO}
Richard Jordan, David Kinderlehrer, and Felix Otto, \emph{The variational
  formulation of the {F}okker-{P}lanck equation}, SIAM J. Math. Anal.
  \textbf{29} (1998), no.~1, 1--17. \MR{1617171 (2000b:35258)}

\bibitem[Kun69]{Kunita69}
Hiroshi Kunita, \emph{Absolute continuity of {M}arkov processes and
  generators}, Nagoya Math. J. \textbf{36} (1969), 1--26. \MR{0250387 (40
  \#3626)}

\bibitem[LLY11]{LLY11}
Yong Lin, Linyuan Lu, and Shing-Tung Yau, \emph{Ricci curvature of graphs},
  Tohoku Math. J. (2) \textbf{63} (2011), no.~4, 605--627. \MR{2872958}

\bibitem[LV09]{LV09}
John Lott and C{\'e}dric Villani, \emph{Ricci curvature for metric-measure
  spaces via optimal transport}, Ann. of Math. (2) \textbf{169} (2009), no.~3,
  903--991. \MR{2480619 (2010i:53068)}

\bibitem[LY10]{LY10}
Yong Lin and Shing-Tung Yau, \emph{Ricci curvature and eigenvalue estimate on
  locally finite graphs}, Math. Res. Lett. \textbf{17} (2010), no.~2, 343--356.
  \MR{2644381 (2011e:05068)}

\bibitem[McC97]{mc97}
Robert~J. McCann, \emph{A convexity principle for interacting gases}, Adv.
  Math. \textbf{128} (1997), no.~1, 153--179. \MR{1451422 (98e:82003)}

\bibitem[Mie13]{Mi13}
Alexander Mielke, \emph{Geodesic convexity of the relative entropy in
  reversible {M}arkov chains}, Calc. Var. Partial Differential Equations
  \textbf{48} (2013), no.~1-2, 1--31. \MR{3090532}

\bibitem[Nab13]{Nab13}
Aaron Naber, \emph{Characterization of bounded ricci curvature on smooth and
  nonsmooth settings}, arXiv:1306.6512, 2013.

\bibitem[Oll09]{Oll09}
Yann Ollivier, \emph{Ricci curvature of {M}arkov chains on metric spaces}, J.
  Funct. Anal. \textbf{256} (2009), no.~3, 810--864. \MR{2484937 (2010j:58081)}

\bibitem[Oll10]{SurveyO}
\bysame, \emph{A survey of {R}icci curvature for metric spaces and {M}arkov
  chains}, Probabilistic approach to geometry, Adv. Stud. Pure Math., vol.~57,
  Math. Soc. Japan, Tokyo, 2010, pp.~343--381. \MR{2648269 (2011d:58087)}

\bibitem[Pet06]{PP}
Peter Petersen, \emph{Riemannian geometry}, second ed., Graduate Texts in
  Mathematics, vol. 171, Springer, New York, 2006. \MR{2243772 (2007a:53001)}

\bibitem[Rot74]{Roth74}
Jean-Pierre Roth, \emph{Op\'erateurs carr\'e du champ et formule de
  {L}\'evy-{K}inchine sur les espaces localement compacts}, C. R. Acad. Sci.
  Paris S\'er. A \textbf{278} (1974), 1103--1106. \MR{0350047 (50 \#2540)}

\bibitem[Stu06a]{stI}
Karl-Theodor Sturm, \emph{On the geometry of metric measure spaces. {I}}, Acta
  Math. \textbf{196} (2006), no.~1, 65--131. \MR{2237206 (2007k:53051a)}

\bibitem[Stu06b]{stII}
\bysame, \emph{On the geometry of metric measure spaces. {II}}, Acta Math.
  \textbf{196} (2006), no.~1, 133--177. \MR{2237207 (2007k:53051b)}

\bibitem[SW12]{SW12}
A.~Singer and H.-T. Wu, \emph{Vector diffusion maps and the connection
  {L}aplacian}, Comm. Pure Appl. Math. \textbf{65} (2012), no.~8, 1067--1144.
  \MR{2928092}

\bibitem[Syn31]{syn31}
J.~L. Synge, \emph{A {C}haracteristic {F}unction in {R}iemannian {S}pace and
  its {A}pplication to the {S}olution of {G}eodesic {T}riangles}, Proc. London
  Math. Soc. \textbf{S2-32} (1931), no.~1, 241. \MR{1575991}

\bibitem[vRS05]{vRS}
Max-K. von Renesse and Karl-Theodor Sturm, \emph{Transport inequalities,
  gradient estimates, entropy, and {R}icci curvature}, Comm. Pure Appl. Math.
  \textbf{58} (2005), no.~7, 923--940. \MR{2142879 (2006j:53048)}

\end{thebibliography}

\end{document}